\newtheorem{theorem}{Theorem}
\newtheorem{lemma}[theorem]{Lemma}
\newtheorem{proposition}[theorem]{Proposition}
\newtheorem{question}[theorem]{Question}
\numberwithin{equation}{section}
\numberwithin{theorem}{section}
\begin{document}

\begin{center}
\texttt{Comments, corrections,
and related references welcomed, as always!}\\[.5em]
{\TeX}ed \today
\vspace{2em}
\end{center}

\title%
[Infinite and finite direct sums]%
{An elementary result on \\
infinite and finite direct sums of modules}
\thanks{%
Archived at \url{http://arxiv.org/abs/2208.06511}\,.
Also readable at \url{http://math.berkeley.edu/~gbergman/papers/}.
The latter version may be revised more frequently than the former.
After publication, any updates, errata, related references,
etc.\ found will be recorded at
\url{http://math.berkeley.edu/~gbergman/papers/}.\\
}

\subjclass[2020]{Primary: 16D70.
Secondary: 08B25, 18A30.
}
\keywords{isomorphisms among direct sums of modules}

\author{George M.\ Bergman}
\address{Department of Mathematics\\
University of California\\
Berkeley, CA 94720-3840, USA}
\email{gbergman@math.berkeley.edu}

\begin{abstract}
Let $R$ be a ring, and consider a left $\!R\!$-module
given with two (generally infinite) direct sum decompositions,
$A\oplus(\bigoplus_{i\in I} C_i)=M=B\oplus(\bigoplus_{j\in J} D_j),$
such that the submodules $A$ and $B$ and the $D_j$ are each finitely
generated.
We show that there then exist {\em finite} subsets $I_0\subseteq I,$
$J_0\subseteq J,$ and a direct summand
$Y\subseteq \bigoplus_{i\in I_0} C_i,$ such that
$A \oplus Y \ =\ B \oplus(\bigoplus_{j\in J_0} D_j).$

We then note some ways that this result can and cannot be generalized,
and pose some related questions.
\end{abstract}
\maketitle

\section{The result}\label{S.main}

Throughout this note, $R$ will be an associative unital ring,
and `module' will mean unital left $\!R\!$-module.
An easy observation is

\begin{lemma}[$\approx$\cite{Fac}, Lemma~2.1, p.33]\label{L.oplus}
Given a module with a direct sum decomposition $S\oplus T,$ and a
submodule $P$ thereof containing $S,$ one has $P=S\oplus (T\cap P).$
\qed
\end{lemma}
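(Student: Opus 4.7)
The plan is to prove the two inclusions of $P=S\oplus(T\cap P)$, and then verify that the sum is direct; the only non-trivial bit is getting the decomposition of an element of $P$ to land inside $S+(T\cap P)$, and this is handled by exploiting the hypothesis $S\subseteq P$ to conclude that the $\!T\!$-component lies in $P$.

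Concretely, I would first note the easy inclusion $S+(T\cap P)\subseteq P$, which holds because $S\subseteq P$ by hypothesis and $T\cap P\subseteq P$ by definition. For the reverse inclusion, I would take an arbitrary $p\in P\subseteq M=S\oplus T$ and write $p=s+t$ with $s\in S$ and $t\in T$ uniquely. Since $S\subseteq P$, we have $s\in P$, whence $t=p-s\in P$, so $t\in T\cap P$. This shows $p\in S+(T\cap P)$, giving the reverse inclusion.

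Finally, to see that the sum $S+(T\cap P)$ is direct, I would observe that $S\cap(T\cap P)\subseteq S\cap T=\{0\}$, since the ambient sum $S\oplus T$ is direct. Combining these three steps yields $P=S\oplus(T\cap P)$.

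There is no real obstacle here: this is the familiar modular-law argument, and the whole content of the lemma is the observation that the $\!T\!$-component of any $p\in P$ automatically stays in $P$ once we know that the $\!S\!$-component does. The author presumably writes ``\qed'' immediately after the statement because the argument is this short.
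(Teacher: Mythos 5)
Your proof is correct and is exactly the standard argument the paper has in mind: the paper gives no proof at all (the \qed follows the statement, with a citation to Facchini), treating precisely this one-line modular-law computation as obvious. Nothing to add or correct.
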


The next proposition will be key to the
proof of our main result, Theorem~\ref{T.main}.
I am grateful to Pace Nielsen
and Mauricio Medina-B\'arcenas for this version of
the proposition, stronger than the one in an earlier draft of
this note, and leading to a stronger version of Theorem~\ref{T.main},
answering a question I had previously posed.

\begin{proposition}[P.\,Nielsen, M.\,Medina-B\'arcenas]\label{P.4term}
Suppose we are given a module with a direct
sum decomposition $B\oplus V,$
and a submodule which admits a direct sum
decomposition $A\oplus X,$ such that
\begin{equation}\begin{minipage}[c]{35pc}\label{d.Bsseteq}
$A\ \subseteq\ B\ \subseteq\ A \oplus X\ \subseteq\ B\oplus V$
\end{minipage}\end{equation}
\textup{(}with no assumption on whether $X\subseteq V).$

Then we have a direct sum decomposition
\begin{equation}\begin{minipage}[c]{35pc}\label{d.ABA+}
$B\ =\ A\oplus(X\cap B),$
\end{minipage}\end{equation}
in which the summand $X\cap B$ is also a direct summand in $X;$
precisely:
\begin{equation}\begin{minipage}[c]{35pc}\label{d.sums}
$X\ =\ (X\cap B)\ \oplus\ (X\cap(A\oplus V)).$
\end{minipage}\end{equation}
\end{proposition}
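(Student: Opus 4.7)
The plan is to dispatch \eqref{d.ABA+} immediately from Lemma~\ref{L.oplus}, and then to prove \eqref{d.sums} by separately verifying (i)~that every $x\in X$ decomposes as claimed, and (ii)~that the two proposed summands meet only in $0$.

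For \eqref{d.ABA+}: apply Lemma~\ref{L.oplus} with $S=A$, $T=X$, and $P=B$.  The hypotheses $A\subseteq B\subseteq A\oplus X$ from \eqref{d.Bsseteq} are exactly what the lemma requires, yielding $B=A\oplus(X\cap B)$.

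For the sum direction in \eqref{d.sums}, I would take an arbitrary $x\in X$ and run it through the two nested direct sum structures in turn.  Since $x\in A\oplus X\subseteq B\oplus V$, first split $x=b+v$ with $b\in B$, $v\in V$.  Then, since $b\in B\subseteq A\oplus X$, split again as $b=a+x'$ with $a\in A$, $x'\in X$.  I would then claim that $x'\in X\cap B$, because $x'=b-a$ and $A\subseteq B$, and that $x-x'\in X\cap(A\oplus V)$, because $x-x'\in X$ trivially and $x-x'=a-v\in A\oplus V$.  Thus $x=x'+(x-x')$ with the two pieces in the required summands.

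For the triviality of the intersection, suppose $y\in(X\cap B)\cap(X\cap(A\oplus V))$, and write $y=a+v$ with $a\in A$, $v\in V$.  Since $y\in B$ and $a\in A\subseteq B$, the difference $v=y-a$ lies in $B\cap V=0$, so $y=a\in A$; but then $y\in A\cap X$, which is $0$ because $A\oplus X$ is a direct sum.  Hence $y=0$, completing the proof of \eqref{d.sums}.

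There is no single hard step here; the one piece of insight is the idea of shuttling an element $x\in X$ first into its $B\oplus V$ decomposition and then refining its $B$-part via $B\subseteq A\oplus X$.  Once one commits to that double decomposition, everything else is formal manipulation using $A\subseteq B$ and the directness of the two given sums.
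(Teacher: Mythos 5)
Your proof is correct, and for the main claim it takes a genuinely different route from the paper's. The derivation of \eqref{d.ABA+} is identical to the paper's first step (Lemma~\ref{L.oplus} with $S=A$, $T=X$, $P=B$). For \eqref{d.sums}, however, the paper stays structural: it applies Lemma~\ref{L.oplus} a second time to the last three terms of \eqref{d.Bsseteq} to get $A\oplus X=B\oplus(V\cap(A\oplus X))$, substitutes \eqref{d.ABA+} into this, applies the lemma a third time to peel off the summand $X\cap B$ inside $X$, and then needs only a one-line element chase to identify the resulting complement with $X\cap(A\oplus V)$. You instead verify \eqref{d.sums} directly by hand: spanning via the double decomposition $x=b+v$, $b=a+x'$ (using $A\subseteq B$ to place $x'=b-a$ in $X\cap B$), and trivial intersection via $B\cap V=0$ and $A\cap X=0$. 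Your version is more self-contained and arguably quicker to check, at the cost of redoing by element computations what the paper extracts from repeated use of its lemma; the paper's version keeps element manipulation to a single inclusion and displays the intermediate decompositions explicitly, which is what makes the complement's description reusable in Theorem~\ref{T.main}. One harmless slip: with your notation $x=b+v$ and $b=a+x'$, one gets $x-x'=a+v$, not $a-v$; either way the element lies in $A\oplus V$, so the argument is unaffected.
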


\begin{proof}
We apply Lemma~\ref{L.oplus}, on the one hand
to the first three terms of~\eqref{d.Bsseteq}, and on
the other hand to the last three terms, in each case
taking the leftmost term for $S,$ the direct summand attached
to it in the rightmost term for $T,$ and the middle term for $P.$
The application to the first three terms of~\eqref{d.Bsseteq}
gives~\eqref{d.ABA+}, while the application to the last three gives
\begin{equation}\begin{minipage}[c]{35pc}\label{d.BA+B+}
$A\oplus X\ =\ B\oplus (V\cap (A\oplus X)).$
\end{minipage}\end{equation}
Substituting~\eqref{d.ABA+} into the right-hand side
of~\eqref{d.BA+B+} gives
\begin{equation}\begin{minipage}[c]{35pc}\label{d.messy}
$A\oplus X\ =\ A \oplus (X\cap B) \oplus (V\cap (A\oplus X)).$
\end{minipage}\end{equation}
We now make a third application of Lemma~\ref{L.oplus}, this time
taking for $S$ the middle summand $X\cap B$ on the right-hand side
of~\eqref{d.messy}, for $T$ the sum of the other two
terms on that side, and for $P$ the module $X,$ which, in view
of the left-hand side of~\eqref{d.messy}, is contained
in the right-hand side, and which clearly contains our chosen $S.$
The result is
\begin{equation}\begin{minipage}[c]{35pc}\label{d.pre-sums}
$X\ =\ (X\cap B)\ \oplus\ ((A\oplus(V\cap(A\oplus X)))\cap X).$
\end{minipage}\end{equation}
To get~\eqref{d.sums}, it will suffice to show that the terms
at which~\eqref{d.sums} and~\eqref{d.pre-sums} differ are the same;
i.e., that:
\begin{equation}\begin{minipage}[c]{35pc}\label{i.e., that}
$X\cap(A\oplus V)\ =\ (A\oplus(V\cap(A\oplus X)))\cap X.$
\end{minipage}\end{equation}
Here $\supseteq$ is immediate, since the right-hand side can,
essentially, be obtained from the left by shrinking
$V$ to $V\cap(A\oplus X).$
To get the reverse inclusion, consider any element of the
left-hand side, and let us write it
\begin{equation}\begin{minipage}[c]{35pc}\label{d.a+v}
$a + v\ =\ x,$ where $a\in A,\ v\in V,\ x\in X.$
\end{minipage}\end{equation}
If we rewrite the above equation as
\begin{equation}\begin{minipage}[c]{35pc}\label{d.-a+x}
$v\ =\ -a+x,$
\end{minipage}\end{equation}
we see that $v$ belongs to $V\cap(A\oplus X),$ hence $a+v$ belongs
to $A\oplus (V\cap(A\oplus X)),$ and since $a+v=x,$ this element
in fact belongs to $(A\oplus (V\cap(A\oplus X)))\cap X,$ as required.
\end{proof}

We can now prove

\begin{theorem}\label{T.main}
Suppose $M$ is a module having two
\textup{(}generally infinite\textup{)} direct sum decompositions,
\begin{equation}\begin{minipage}[c]{35pc}\label{d.M}
$A \oplus(\bigoplus_{i\in I} C_i) \ =\ M\ =\ %
B \oplus(\bigoplus_{j\in J} D_j),$
\end{minipage}\end{equation}
where the summands $A$ and $B,$ and each of the $D_j,$
are finitely generated.

Then there exist finite subsets $I_0\subseteq I$ and $J_0\subseteq J,$
and a direct summand $Y$ in $\bigoplus_{i\in I_0} C_i,$ such that
\begin{equation}\begin{minipage}[c]{35pc}\label{d.fin_sums}
$A \oplus Y\ =\ B \oplus(\bigoplus_{j\in J_0} D_j).$
\end{minipage}\end{equation}
\end{theorem}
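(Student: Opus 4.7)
My plan is to reduce Theorem~\ref{T.main} to a single application of Proposition~\ref{P.4term}, after using finite generation to replace the infinite decompositions by finite ones. The obstacle to applying the proposition directly is that we are not given $A\subseteq B;$ the fix is to enlarge $B$ by finitely many $D_j$'s so that it absorbs $A,$ then enlarge $\bigoplus_i C_i$ by finitely many $C_i$'s so that it absorbs this enlarged $B.$

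Concretely, since $A$ is finitely generated and $A\subseteq M=B\oplus\bigoplus_{j\in J}D_j,$ I would first choose a finite $J_0\subseteq J$ with $A\subseteq B\oplus\bigoplus_{j\in J_0}D_j,$ and set
\[
B'\ =\ B\oplus\bigoplus_{j\in J_0}D_j.
\]
Since $B$ and the (finitely many) $D_j$ for $j\in J_0$ are each finitely generated, $B'$ is finitely generated as well. Using the other decomposition $M=A\oplus\bigoplus_{i\in I}C_i$ and the finite generation of $B',$ I would next choose a finite $I_0\subseteq I$ with $B'\subseteq A\oplus\bigoplus_{i\in I_0}C_i.$ Finally, set $X=\bigoplus_{i\in I_0}C_i$ and $V=\bigoplus_{j\in J\setminus J_0}D_j,$ so that $M=B'\oplus V.$

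By construction the chain
\[
A\ \subseteq\ B'\ \subseteq\ A\oplus X\ \subseteq\ B'\oplus V
\]
holds, and $A\oplus X$ is a direct sum because it is a partial subsum of the first decomposition of $M$ in~\eqref{d.M}. Proposition~\ref{P.4term}, applied with $A,$ $B',$ $X,$ $V$ in place of $A,$ $B,$ $X,$ $V,$ then yields both $B'=A\oplus(X\cap B')$ and a decomposition of $X$ exhibiting $X\cap B'$ as a direct summand. Taking $Y=X\cap B'$ therefore gives a direct summand of $\bigoplus_{i\in I_0}C_i$ with
\[
A\oplus Y\ =\ B'\ =\ B\oplus\bigoplus_{j\in J_0}D_j,
\]
which is~\eqref{d.fin_sums}.

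The main (and essentially only) non-routine step is spotting the right setup: enlarging $B$ to $B'$ before enlarging $\bigoplus_i C_i,$ so that the four-term inclusion required by Proposition~\ref{P.4term} can be arranged using only \emph{finitely} many summands on each side. After that, verifying the inclusion chain is immediate from the two absorption choices, and the proposition supplies both the decomposition of $B'$ and the direct-summand property of $Y$ at once.
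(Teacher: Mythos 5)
Your proposal is correct and follows essentially the same route as the paper's own proof: absorb $A$ into $B'=B\oplus\bigoplus_{j\in J_0}D_j,$ absorb $B'$ into $A\oplus\bigoplus_{i\in I_0}C_i,$ and apply Proposition~\ref{P.4term} to the resulting four-term chain with $V=\bigoplus_{j\in J\setminus J_0}D_j,$ taking $Y=(\bigoplus_{i\in I_0}C_i)\cap B'.$ No gaps; the argument matches the paper's in both structure and detail.
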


\begin{proof}
Since $A$ is finitely generated, it is contained
in the sum of a finite subset of the modules on the right-hand
side of~\eqref{d.M}, which we may take to include $B.$
That is,
\begin{equation}\begin{minipage}[c]{35pc}\label{d.B_sset}
$A\ \subseteq\ B \oplus(\bigoplus_{j\in J_0} D_j),$~
where $J_0\subseteq J$ is finite.
\end{minipage}\end{equation}

Since $B$ and the $D_j$ are all finitely generated we
can, again in view of~\eqref{d.M},
choose finitely many of the $C_i$ such that
\begin{equation}\begin{minipage}[c]{35pc}\label{d.A_oplus_sset}
$B \oplus(\bigoplus_{j\in J_0} D_j)\ \subseteq\ %
A \oplus(\bigoplus_{i\in I_0} C_i),$~ where $I_0\subseteq I$ is finite.
\end{minipage}\end{equation}

Bringing together
the inclusions~\eqref{d.B_sset} and~\eqref{d.A_oplus_sset},
and putting $\subseteq B\oplus(\bigoplus_{j\in J} D_j)$
on the far right (in view of~\eqref{d.M}),
we get a chain like~\eqref{d.Bsseteq},
with $B \oplus(\bigoplus_{j\in J_0} D_j)$
in the role of the $B$ of~\eqref{d.Bsseteq},
$\bigoplus_{i\in I_0} C_i$ in the role of $X,$
and $\bigoplus_{j\in J\setminus J_0} D_j$ in the role of $V;$
so we can apply Proposition~\ref{P.4term}.
The conclusion~\eqref{d.ABA+} of that proposition (with sides
reversed) gives us a decomposition~\eqref{d.fin_sums},
with $Y = (\bigoplus_{i\in I_0} C_i)\cap
(B \oplus(\bigoplus_{j\in J_0} D_j)),$
and the line after~\eqref{d.ABA+} tells us that this $Y$ is indeed
a direct summand in $X = \bigoplus_{i\in I_0} C_i$
(with a complement that can be described using~\eqref{d.sums}, though we
have not included that description in the statement of the present
theorem).
\end{proof}

Note that if, rather than being given, as above, a module $M$
with two internal direct-sum decompositions, we are given an
isomorphism between two external direct sums of families of modules,
\begin{equation}\begin{minipage}[c]{35pc}\label{d.inf_sums}
$A \oplus(\bigoplus_{i\in I} C_i)\ \cong\ %
B \oplus(\bigoplus_{j\in J} D_j),$
\end{minipage}\end{equation}
with $A,$ $B$ and the $D_j$ again all finitely generated,
then we can apply the above theorem by taking for $M$ the
left-hand side of~\eqref{d.inf_sums}, and regarding
the inverse images in $M,$ under the isomorphism~\eqref{d.inf_sums},
of the summands on the right-hand side as submodules of $M,$
yielding a second direct-sum decomposition of $M.$
Theorem~\ref{T.main}, applied to that pair of decompositions, then tells
us that for some finite subsets $I_0\subseteq I$ and $J_0\subseteq J,$
and some direct summand $Y$ of $\bigoplus_{i\in I_0} C_i,$ we have
\begin{equation}\begin{minipage}[c]{35pc}\label{d.fin_sums_iso}
$A \oplus Y\ \cong\ B \oplus(\bigoplus_{j\in J_0} D_j).$
\end{minipage}\end{equation}

\section{Remarks}\label{S.remarks}

Theorem~\ref{T.main} was motivated by a question posed by
Z.\,Nazemian (personal communication),
roughly: if $Q$ and $P$ are finitely generated
projective modules over a ring $R,$ such that every direct summand of
a module $P^a$ for a natural number $a$ is isomorphic to $P^b$ for a
natural number $b,$ and if $Q^m\oplus(\bigoplus_{i=1}^\infty P)\cong
Q^n\oplus(\bigoplus_{i=1}^\infty P)$ for some $m$ and $n,$ then must
$Q^m\oplus P^k\cong Q^n\oplus P^l$ for some finite $k$ and $l$?
I obtained an affirmative answer, and eventually generalized
the proof to give a result which, with help from Pace Nielsen,
became Theorem~\ref{T.main}.
(Nazemian's question had some further hypotheses which I
don't state above.
If the answer had been a counterexample,
then, of course, it would have been important to find one
that satisfied those restrictions; but since a positive
result was found, the fewer hypotheses the better.)

I have no cases of Theorem~\ref{T.main} in mind in which
the $C_i$ are not finitely generated.
They are not so assumed simply because the
proof does not need such an assumption.

There are several papers in the literature proving relationships
among expressions of a module as finite and/or infinite direct sums;
cf.\ \cite{A+G+O'M+P}, \cite{Fac}, \cite{Fac+Levy}, \cite{Kap}.
All of these, however, have strong assumptions either
on the base-ring (e.g., in\ \cite{A+G+O'M+P}, that it is what the
authors call a {\em separative exchange ring}) or on
the summand modules (e.g., in \cite{Fac+Levy},
that they have semilocal endomorphism rings).
I am not aware of any known results with hypotheses as weak
as those of Theorem~\ref{T.main}.

The phrase ``An elementary result'' in the title of this
note is half tongue-in-cheek.
Theorem~\ref{T.main} is indeed elementary in
that it does not call on any deep definitions or results,
and its statement and proof are not long.
But finding these involved much frustrating trial
and error, and I still don't have good intuition about them.
In the next section we will note, inter alia, variations on that
theorem involving some less elementary hypotheses.

\section{Some ways our result can and cannot be generalized, and some questions}\label{S.questions}

In the finite relation~\eqref{d.fin_sums} that we obtain in
Theorem~\ref{T.main} from the generally infinite relation~\eqref{d.M},
the summands $A,$ $B$ and $D_j$ $(j\in J_0)$ are modules
occurring in that original relation, but~$Y$ is a
{\em direct summand} in a sum of such modules.
Can we get a similar result in which {\em all} the summands are (at
least up to isomorphism) terms from the original relation?

An easy example shows that we cannot.
Let $R$ be a field $k,$ let $I$ and $J$ both be countably infinite
sets, let each $C_i$ and each $D_j$ be a $\!k\!$-vector-space
of dimension~$2,$ and let $A$ and $B$ be finite-dimensional
$\!k\!$-vector-spaces, one of odd and the other of even dimension.
Note that both sides of~\eqref{d.inf_sums} are vector spaces
of countably infinite dimension, so~\eqref{d.inf_sums} holds;
and this isomorphism can, as noted, be turned into a
relation~\eqref{d.M}.
But if we form the direct sum of $A$ with
some finite sum of the $C_i$ and $D_j,$
and likewise of $B$ with some finite sum of the $C_i$ and $D_j,$
one of these sums will be odd-dimensional and the other
even-dimensional;
so no variant of~\eqref{d.fin_sums} or~\eqref{d.fin_sums_iso}
without a term like $Y$ can hold.

A couple of people have pointed out to me that in Theorem~\ref{T.main},
``finitely generated'' can be everywhere weakened
to the condition called ``dually slender'' in~\cite{E+G+T} and
``small'' in~\cite[{\S}2.9]{Fac}.
A module $A$ is so called~if
\begin{equation}\begin{minipage}[c]{35pc}\label{d.fin_subsum}
Every homomorphism of $A$ into an infinite direct
sum of modules has image in a finite subsum,
\end{minipage}\end{equation}
equivalently, if
\begin{equation}\begin{minipage}[c]{35pc}\label{d.sum_to_A}
$A$ is not the union of any countable strictly increasing
chain of proper submodules,\linebreak\hspace{5pc}
$A_0\subset A_1 \subset \dots \subset A_i\subset\dots.$
\end{minipage}\end{equation}
(To see this equivalence, first
assume~\eqref{d.fin_subsum}, and note that if we had
$A_0\subset A_1 \subset \dots$ with union $A,$
the induced map $A\to\bigoplus_{i\in\omega} A/A_i$ would
give a contradiction to that assumption.
Conversely, assuming~\eqref{d.sum_to_A}, suppose
$A$ had a homomorphism into a direct sum $\bigoplus_I E_i,$
which did not land in a finite subsum.
Then the induced homomorphism into some countable subsum
$\bigoplus_{n\in\omega} E_{i_n}$ $(i_0, i_1, \dots$ distinct
elements of $I)$ would have the same property; and we see that
the kernels $A_n$ of the induced homomorphisms
$A\to\bigoplus_{i>n} E_i$ $(n\in\omega)$
would contradict~\eqref{d.sum_to_A}.)

In another direction, it is not hard to see that one can,
in the statement and proof of Theorem~\ref{T.main},
replace modules with {\em torsion-free} abelian groups,
and {\em finitely generated} modules with such groups having
{\em finite rank}
(maximal number of linearly independent elements).
More generally, one has the obvious analogous
result with torsion-free modules over any commutative integral domain
in place of torsion-free abelian groups.

One might try to embrace all of these versions in a statement
about direct sums in an appropriate sort of additive category; note that
this would not in general be an abelian category, since the category
of torsion-free abelian groups does not have cokernels.
Rather than trying at this time to give a ``definitive''
generalization of Theorem~\ref{T.main}, I hope that
readers will be able to call on the proof, or provide some
modification thereof, in cases they want to use.

In a different direction, I wonder about

\begin{question}\label{Q.prod}
Can one get a result like Theorem~\ref{T.main}, but
with direct product modules $A\times\prod_{i\in I} C_i$ and
$B\times\prod_{j\in J} D_j$ in place of the direct sums
$A\oplus\bigoplus_{i\in I} C_i$ and $B\oplus\bigoplus_{j\in J} D_j$?
\end{question}

(The replacement of the {\em binary} operators ``$\oplus$'' with
``$\times$'' above is, of course, just a formality;
it is for the infinite families that direct sums and
direct products differ.)

A difficulty is that, unlike the situation for infinite
direct sums, a finitely generated submodule of an
infinite direct product need not lie in a finite subproduct.
(E.g., in an infinite product of $\!1\!$-dimensional
vector spaces over a field, consider the $\!1\!$-dimensional subspace
generated by an element with infinite support.)

A quasi-cure for that problem is to replace the condition
of finite generation by that of finitely {\em cogeneration}.
Recall that a module is said to be `finitely cogenerated' if
every family of submodules thereof having zero intersection has a
finite subfamily with zero intersection~\cite{Wiki_fcog}.
If a finitely cogenerated module is embedded
in a possibly infinite direct product of modules, we see that
its projection onto some finite subproduct thereof will indeed
be an embedding.
Using this, given an isomorphism
\begin{equation}\begin{minipage}[c]{35pc}\label{d.inf_prods}
$
A\times(\prod_{i\in I} C_i)\ =\ M\ \cong\ B\times(\prod_{j\in J} D_j)
$
\end{minipage}\end{equation}
where $A,$ $B$ and the $D_i$ are finitely cogenerated,
we can obtain a chain of embeddings
\begin{equation}\begin{minipage}[c]{35pc}\label{d.prod_chain}
$A\ \hookrightarrow\ B \times(\prod_{j\in J_0} D_j)\ %
\hookrightarrow\ A \times(\prod_{i\in I_0} C_i)\ %
\hookrightarrow\ B \times(\prod_{j\in J} D_j),$
\end{minipage}\end{equation}
where $I_0$ and $J_0$ are finite subsets of $I$ and $J.$

But so far as I can see, we cannot say in~\eqref{d.prod_chain},
as we could in the proof of Theorem~\ref{T.main},
that the composite of two successive embeddings
gives the inclusion associated with the indicated
direct product decompositions.
If we could, then since finite direct products of modules
are the same as finite direct sums, we would be able to
apply Proposition~\ref{P.4term} as before.

Actually, a formal dual of the approach of Theorem~\ref{T.main}
should involve a chain of {\em surjections} going in the opposite
direction to the embeddings of~\eqref{d.prod_chain}.

Mauricio Medina-B\'arcenas~\cite{MMB} has formulated a result
which dualizes in a different way the condition of
being a direct sum; whether his result is
relevant to Question~\ref{Q.prod} is not clear.

In any case, I don't at
present see a way to answer Question~\ref{Q.prod}.

A final question, about which I have not thought hard, is

\begin{question}\label{Q.non_mod}
For what sorts of structures other than modules
do analogs of Theorem~\ref{T.main} hold?
\end{question}

One finds that the development of that theorem works
for not-necessarily-abelian groups (with
`direct sum' taken, as for modules,
to mean the subgroup of the direct product consisting of
tuples with only finitely many non-identity terms).
I don't know whether there are versions of the result that allow, say,
semidirect products in place of direct products.
The paper \cite{Cr+Jo} obtains results on
direct decompositions of more general algebraic structures,
but, as in the other publications cited, a strong hypothesis
(in this case called the {\em exchange property}) is assumed.

\section{Acknowledgements}\label{S.ackn}

I am indebted to Brian Davey, Jonathan Farley, Laszlo Fuchs,
Pace Nielsen, Mauricio Medina-B\'arcenas, Kevin O'Meara, and J.-P.~Serre
for references to related literature and for other helpful comments
on this note.

\end{document}